\theoremstyle{definition}
\newtheorem*{theorem*}{Theorem}
\newtheorem{proposition}{Proposition}
\newcommand{\RD}{\mathrm{RD}}
\title{Removal of 5 Terms from a Degree 21 Polynomial}
\author{
	Curtis Heberle
}
\date{\today}
\begin{document}
\maketitle

\begin{abstract}In 1683 Tschirnhaus claimed to have developed an algebraic method to determine the roots of any degree $n$ polynomial. His argument was flawed, but it spurred a great deal of work by mathematicians including Bring, Jerrard, Hamilton, Sylvester, and Hilbert. Many of the problems they considered can be framed in terms of the geometric notion of resolvent degree, introduced by Farb and Wolfson. Roughly speaking, we have $RD(n) \leq n-k$ if a general degree $n$ polynomial can be put into an $(n-k)$-parameter form. In the present note we discuss a method introduced by Sylvester for solving systems of polynomial equations, and apply it to finding bounds on resolvent degree. In particular, we prove the bound $RD(21) \leq 15$. This bound has been independently established by Sutherland using the classical theory of polarity.
\end{abstract}

\tableofcontents

\section{Introduction}
Consider a degree $n$ polynomial $p(x)$ with roots $x_1, \ldots, x_n$:
$$p(x) = x^n+a_1x^{n-1}+\ldots+a_nx+a_n = \prod_{i=1}^n (x-x_i).$$
A \emph{Tschirnhaus transformation} is a polynomial transformation of the roots of $p$. That is, given a polynomial
$$T(x) =  b_{n-1}x^{n-1}+b_{n-2}x^{n-2}+\ldots+b_1x+b_0,$$
and setting
$$y = T(x)$$
we can form a new polynomial with roots $T(x_1), \ldots, T(x_n)$:
$$q(y) = y^n+A_1y^{n-1}+\ldots+A_{n-1}y+A_{n}= \prod_{i=1}^n (y-T(x_i)).$$
Tschirnhaus's original idea was to transform $p$ to a solvable form -- specifically, to choose $T$ such that $q(y) = y^n+A_n$ -- as part of a proposed method for determining in radicals the roots of $p$. This method is not successful because in general it is not possible to determine in radicals the coefficients of the necessary $T$. On the other hand, a number of interesting results have since been proved involving the use of Tschirnhaus transformations to reduce families of polynomials to certain canonical forms; in particular, the problem of finding $T$ such that $q$ has the $(n-k)$-parameter form
$$q(y) = y^n+A_{k+1}y^{n-k-1}+\ldots+A_{n-1}y+A_n$$
with $A_1 = A_2 = \ldots = A_k = 0$ has been widely considered. We provide a partial review of the historical development of this theory in Section \ref{background}.

Recently progress has been made by casting the problem in terms of the geometric notion of \emph{resolvent degree}, as introduced by Farb and Wolfson.\cite{farb2020resolvent} We review the precise definitions in Section \ref{rd}, but roughly speaking the idea is as follows. We say a generically finite dominant map $X \to Y$ has \emph{essential dimension} at most $d$ if there is a pullback square
\[
\begin{tikzcd}
X \arrow[rr] \arrow[dd] &  & W \arrow[dd] \\
                        &  &              \\ 
Y \arrow[rr]            &  & Z           
\end{tikzcd}
\]
with $\dim(Z) \leq d$. We then say $X \to Y$ has \emph{resolvent degree} at most $d$ if it factors as a composition of maps each of essential dimension at most $d$. (This is somewhat stronger than what is required in the precise definition; for instance, it is sufficient for $X \to Y$ to be birationally equivalent to such a composition.)

In particular, we are interested in the resolvent degree of the root cover of the family of degree $n$ polynomials. That is, let $\mathcal{P}_n$ be the space of monic degree $n$ polynomials, and $\widetilde{\mathcal{P}_n}$ the space of pairs $(p, \lambda)$ of polynomials $p \in \mathcal{P}_n$ with a choice of root $\lambda$. Then there is a generically finite dominant map $\widetilde{\mathcal{P}_n} \to \mathcal{P}_n$ by ``forgetting the root'', and we are interested in bounds on
$$\RD(n) := \RD(\widetilde{\mathcal{P}_n} \to \mathcal{P}_n).$$
Intuitively, we have $\RD(n) \leq d$ if the roots of any $p \in \mathcal{P}_n$ can be determined by solving algebraic functions of at most $d$ variables. For example, for $n = 2$, the quadratic formula implies that the root cover $\widetilde{\mathcal{P}_2} \to \mathcal{P}_2$ is a pullback of the map
\begin{align*}
\mathbb{P}^1 \to \mathbb{P}^1
z \mapsto z^2.
\end{align*}
Similarly, the existence of ``formulas in radicals'' for $n=3$ and $n = 4$ implies that $\widetilde{\mathcal{P}_3} \to \mathcal{P}_3$ and $\widetilde{\mathcal{P}_4} \to \mathcal{P}_4$ factor into towers of pullbacks of cyclic self-covers of $\mathbb{P}^1$, so that $\RD(3) = \RD(4) = 1$.

Many classical results concerning Tschirnhaus transformations can be translated into statements about resolvent degree: if any $p \in \mathcal{P}_n$ can be reduced to an $(n-k)$-parameter form by means of a Tschirnhaus transformation $T$, we have
$$\RD(n) \leq n-k,$$
provided that $T$ can \emph{itself} always be determined by solving algebraic functions of at most $n-k$ variables. For example, a result of Bring shows that any degree $5$ polynomial can be put into the one-parameter form $y^5+A_5y+1$.\cite{chen2017erland} This is sufficient to imply $\RD(5) = 1$; in Section \ref{rd} we treat this example in detail. The bounds so-obtained are generally not sharp, as the improvements obtained by Wolfson and Sutherland have demonstrated.\cite{wolfson2021tschirnhaus, sutherland2021upper}

On the other hand, this formulation of the problem implies different constraints on which Tschirnhaus transformations $T$ are permissible than were generally assumed by classical authors. For example, in considering the problem of transforming a general degree $n$ polynomial $p(x)$ into an $(n-k)$-parameter form by means of a Tschirnhaus transformation $T$, Sylvester only considers those transformations whose coefficients can be determined without solving any equation of degree higher than $k$. This is much more restrictive than necessary if one's goal is to show
$$\RD(n) \leq n-k.$$

For example, for $k = 6$, Sylvester obtains the following:
\begin{proposition}[Sylvester]
For $n \geq 44$, a general polynomial of degree $n$ can be put into an $(n-6)$-parameter form by means of a Tschirnhaus transformation whose coefficients can be determined without solving any equation of degree higher than 5.
\end{proposition}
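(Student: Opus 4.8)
The plan is to translate the statement ``$q$ can be put into an $(n-6)$-parameter form'' into a system of polynomial equations in the coefficients $b_0,\dots,b_{n-1}$ of $T$, and then to solve that system by Sylvester's method. Concretely, set $y_i = T(x_i)$ and use the division algorithm to reduce each power $T(x)^j$ modulo $p(x)$ to a polynomial of degree $<n$; then the power sum $P_j := \sum_{i=1}^n T(x_i)^j$ is a form of degree $j$ in $b_0,\dots,b_{n-1}$ whose coefficients are (universal) polynomials in $a_1,\dots,a_n$, since the power sums of the $x_i$ are. By Newton's identities the conditions $A_1 = \dots = A_6 = 0$ are equivalent to $P_1 = \dots = P_6 = 0$, so what must be produced is a nonzero point $(b_0:\dots:b_{n-1})$ satisfying these six equations, of degrees $1,2,3,4,5,6$ respectively, obtained by solving only auxiliary equations of degree at most $5$.

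First I would dispose of $P_1 = 0$: it is linear, so one simply restricts to the hyperplane it cuts out in $\mathbb{P}^{n-1}$, at no cost, leaving $P_2 = \dots = P_6 = 0$. The idea is then to process these in order of increasing degree, at each stage passing to a linear subspace of the current projective space on which the next form vanishes identically, so that only the finitely many auxiliary equations thrown off along the way need to be solved --- each of degree at most the degree of the form being treated. Thus $P_2$ (a quadric) costs only square roots, and $P_3,P_4,P_5$ contribute equations of degrees $3,4,5$. The one genuinely delicate point is the sextic $P_6$: naively, pinning down even a single point of a degree-$6$ hypersurface costs a degree-$6$ equation. Here one invokes Sylvester's device (reviewed in Section~\ref{background}), which uses the room left over in the earlier linear sections to absorb $P_6 = 0$ without ever passing to degree $6$; keeping honest track of this room is precisely what forces $n$ to be large.

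The main obstacle, and the real content of the proposition, is the bookkeeping that shows $n \ge 44$ suffices. One must propagate, stage by stage, a lower bound on the dimension of the linear subspace that survives after $P_2,\dots,P_5$ have been killed --- each stage consuming dimension by an amount governed by how large a linear subspace a form of that degree can be forced to contain inside the ambient space --- together with the extra dimension demanded by the $P_6$-step; the resulting inequality in $n$ turns out to be satisfied exactly at the stated threshold. Along the way I would also verify the genericity hypotheses: that for general $p$ the forms $P_j$ are in sufficiently general position for all of the dimension counts, and hence the solvability of each auxiliary system, to hold, so that the construction applies to a \emph{general} polynomial of degree $n$ and not merely to special ones.

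Finally, I would note --- consistent with the remarks in the introduction and with the sharper results of Wolfson and Sutherland --- that the value $44$ is an artifact of this particular method and is not expected to be optimal. What matters for the resolvent-degree application is only that the reduction of $6$ terms can be carried out within the degree budget $k-1 = 5$; the explicit $n$ for which this is achieved is secondary, and indeed the main note improves on the naive bounds of this type in the case $n=21$.
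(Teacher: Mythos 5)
Your proposal is aimed at the wrong system of equations, and the misreading is fatal rather than cosmetic. In this paper ``putting $p$ into an $(n-6)$-parameter form'' in the present proposition means arranging $A_1=\cdots=A_5=0$: the count $n-6$ comes from the $n-5$ surviving coefficients $A_6,\ldots,A_n$ together with one normalization by rescaling $y$ (compare the quintic example, where $y^5+A_4y+A_5$ becomes the one-parameter $z^5+Az+1$). This is confirmed by the attribution ``$n\geq 44$ for $k=5$'' in Section \ref{background}, by the clause ``without solving any equation of degree higher than $5$'' (which is exactly Sylvester's ``no elevation of degree'': the auxiliary equations never exceed the top degree already present in the system), and by the parallel Main Theorem of Section \ref{actual_proof}, which at $n\geq 21$ likewise imposes only $A_1=\cdots=A_5=0$. (The introduction's definition of ``$(n-k)$-parameter form'' does suggest your reading, so the ambiguity is partly the paper's fault; but the intended content is unambiguous from the rest of the text.) You instead impose $P_1=\cdots=P_6=0$, which introduces a sextic, and then assert that ``Sylvester's device'' absorbs $P_6=0$ without ever passing to degree $6$. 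No such device exists: as described in Section \ref{sylvester}, the method always terminates by intersecting a line with the highest-degree hypersurface of the system, which unavoidably requires solving an equation of that top degree. Under your reading the proposition would claim that a degree-$6$ condition can be satisfied using only degree-$\leq 5$ irrationalities, a much stronger statement than anything Sylvester's method can deliver, and your sketch supplies no mechanism for it.

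Two further points. First, the paper does not actually prove this proposition --- it is quoted from Sylvester (1887) --- so even under the correct reading you would need to carry out the obliteration bookkeeping for the system with one equation of each degree $1$ through $5$. A naive iteration of the formula of obliteration gives roughly $[1,1,1,1]\leq 46$ for the line on the degree-$\leq 4$ subsystem, i.e.\ $n\geq 47$ (Hamilton's bound); reaching $44$ requires Sylvester's sharpened counts, so your claim that ``the resulting inequality turns out to be satisfied exactly at the stated threshold'' is not something your outline verifies. Second, your stage-by-stage picture (kill $P_2$, then $P_3$, then $P_4$, each by a linear section costing only its own degree) is closer to the Hilbert--Wolfson strategy of Sections \ref{linear}--\ref{actual_proof} than to Sylvester's recursion, in which setting aside one top-degree equation and demanding a \emph{line} on the remainder spawns a new, larger system of lower-degree conditions; conflating the two is what makes the $P_6$ step look plausible when it is not.
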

As a corollary, one obtains the resolvent degree bound $\RD(n) \leq n-6$ for $n \geq 44$; on the other hand, finding the necessary Tschirnhaus transformation is a resolvent degree $\RD(5) = 1$ problem, since only equations of degree at most 5 are involved. Sylvester's was nonetheless the best known bound for $k = 6$ prior to Wolfson's 2020 proof that $\RD(n) \leq n-6$ for $n \geq 41$. \cite{wolfson2021tschirnhaus} In Wolfson's argument, finding the necessary Tschirnhaus transformation is shown to be at worst a resolvent degree 35 problem. It has been conjectured by Wiman and (separately) Chebotarev that $\RD(n) \leq 15$ for $n \geq 21$, though their proposed arguments contain gaps. \cite{wiman1927anwendung, chebotarev1954problem} Recently, Sutherland has provided a rigorous proof of this bound using the classical theory of polarity.\cite{sutherland2021upper} 

In the current note we use ideas from Sylvester to provide an alternative proof of the $\RD(n) \leq n-6$ for $n \geq 21$ bound. In particular, we show
\begin{theorem*}[Main Theorem]
For $n \geq 21$, a general polynomial of degree $n$ can be put into an $(n-6)$-parameter form by means of a Tschirnhaus transformation whose coefficients can be determined without solving an equation of degree higher than 20.
\end{theorem*}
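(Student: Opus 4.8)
The plan is to turn the assertion into a system of polynomial equations on the coefficients of $T$ via Newton's identities, and then to solve that system using Sylvester's technique of successively eliminating unknowns through linear subspaces contained in the associated hypersurfaces.

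\emph{Step 1: reduction to power sums.} Write $T(x)=\sum_{j=0}^{n-1}b_jx^j$ and let $p_m:=\sum_{i=1}^n T(x_i)^m$ be the $m$-th power sum of the transformed roots. By Newton's identities, $q(y)=\prod_{i}(y-T(x_i))$ has $A_1=A_2=\cdots=A_6=0$ exactly when $p_1=p_2=\cdots=p_6=0$. Moreover $p_m$ is a form of degree $m$ in $\mathbf b=(b_0,\dots,b_{n-1})$ whose coefficients are symmetric polynomials in the roots, hence polynomials in $a_1,\dots,a_n$, so they may be written down rationally from $p$. Thus the theorem amounts to: for a general degree-$n$ polynomial with $n\ge 21$, produce a point of the projective variety $X:=V(p_1)\cap V(p_2)\cap\cdots\cap V(p_6)\subset\mathbb P^{n-1}$ using only equations of degree at most $20$. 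Since $V(p_1)$ is a hyperplane, impose $p_1=0$ rationally and work in $H:=V(p_1)\cong\mathbb P^{n-2}$ with the forms $p_2,\dots,p_6$.

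\emph{Step 2: Sylvester's construction.} One exploits the extra room (the hypersurface $V(p_m)$ imposes only $\binom{r+m}{m}$ conditions on an $r$-plane) by producing inside $H$ a linear subspace $\Lambda$ of moderate dimension that is contained in $V(p_m)$ for several of the indices $m\in\{2,\dots,6\}$, built up one dimension at a time as a flag $\Lambda_0\subsetneq\Lambda_1\subsetneq\cdots$; each enlargement $\Lambda_{t-1}\rightsquigarrow\langle\Lambda_{t-1},P\rangle$ reduces to solving, for the new point $P$, the polar conditions expressing that the relevant $p_m$ vanish on the enlarged span, a system whose degrees are bounded by the degrees of the $p_m$ being carried, and which is itself resolved by auxiliary flags of the same type. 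The remaining forms $p_m$ — those not absorbed into $\Lambda$ — restrict to $\Lambda$ as hypersurfaces whose common zeros are governed by B\'ezout; a point of that residual scheme, together with a point of $\Lambda$, furnishes the desired $\mathbf b$. The subset of indices to absorb is chosen to keep every degree that arises — in the flag extensions, in the construction of the initial flag point, and in the residual B\'ezout step — at most $20$, while keeping the governing dimension inequalities (emptiness of the relevant Fano schemes, and enough slack for each extension to succeed) satisfiable: the numerics work out to require $n\ge 21$, and the binding degree is produced by the residual intersection of a plane quartic with a plane quintic, $4\cdot 5=20$. For $n=21$ one can tune the choice so that the residual intersection is instead a plane cubic with a plane quintic, $3\cdot 5=15$, which gives the sharper bound $\RD(21)\le 15$ asserted in the abstract.

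\emph{Step 3: the main obstacle.} The essential difficulty is this book-keeping: one must decide which of $p_2,\dots,p_6$ to build into $\Lambda$ and which to leave for the B\'ezout endgame, and one must verify that all the Fano-scheme dimension counts and all the degree bounds hold simultaneously down to $n=21$ — in particular that the naive ``one hypersurface at a time'' cascade, which degrades dimension far too quickly, can be replaced by a more efficient organization. One must also check that for a general $p$ the construction does not degenerate: that the successive hypersurfaces sit in sufficiently general position for each flag extension to go through, and that the residual plane curves meet transversally in the predicted number of reduced points. This genericity is handled in the usual way, by exhibiting a single polynomial (or a single admissible system of auxiliary linear subspaces) for which everything is in general position and invoking semicontinuity. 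Finally one should confirm that translating back and forth between the algebraic system and this geometry introduces no hidden accessory irrationality of degree exceeding $20$, so that the stated bound holds in the precise sense required.
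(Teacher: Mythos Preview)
Your overall strategy---impose the linear condition, build a linear subspace inside the low-degree hypersurfaces by Sylvester-style flag extensions, then finish by a B\'ezout intersection of the remaining hypersurfaces on that subspace---is exactly the paper's. Two concrete problems keep the sketch from being a proof.

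First, you set up six conditions $p_1=\cdots=p_6=0$. Despite the phrase ``$(n-6)$-parameter form'' in the statement, the paper's theorem (see the title and the restatement in Section~\ref{actual_proof}) is about $A_1=\cdots=A_5=0$; the ``$n-6$'' refers to the resulting $\RD$ bound after the usual rescaling, not to six vanishing coefficients. With a sextic $p_6$ in the system your asserted numerics do not hold at $n=21$: absorbing $p_6$ into $\Lambda$ forces degree-$6$ equations in every flag extension, while leaving it for the residual step either pushes the B\'ezout product past $20$ or requires $\dim\Lambda\ge 3$, which in turn demands a larger ambient $n$. Your own endgame (``plane quartic meets plane quintic, $4\cdot 5=20$'') is precisely the five-term calculation, so you evidently had the right picture but miscounted the conditions.

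Second, and more seriously, you defer the actual numerics (``the numerics work out to require $n\ge 21$'') while conceding in Step~3 that this bookkeeping is the essential difficulty. It is the entire content of the proof. The paper carries it out explicitly as the chain
\[
V(A_4)\cap V(A_5)\ \subset\ \mathbb{P}^2\ \subset\ V(A_3)\ \subset\ \mathbb{P}^9\ \subset\ V(A_2)\ \subset\ \mathbb{P}^{n-2}=V(A_1)\ \subset\ \mathbb{P}^{n-1}:
\]
a $\mathbb{P}^9$ on the quadric $V(A_2)\subset\mathbb{P}^{n-2}$ exists and is found solving only quadratics provided $n-2\ge 19$ (this is exactly the source of $n\ge 21$); a $\mathbb{P}^2$ on the cubic $V(A_3)\subset\mathbb{P}^9$ is found via Sylvester's obliteration together with Segre's result that a line on an intersection of two quadrics in $\mathbb{P}^4$ is obtainable with equations of degree $\le 5$ (this is Proposition~5); finally the quartic and quintic curves cut by $A_4,A_5$ on this $\mathbb{P}^2$ meet in a scheme governed by an equation of degree $4\cdot 5=20$. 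Without exhibiting these specific choices of dimensions and verifying the degree bound at each stage, the sketch is not yet a proof.

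One last point: your suggestion that at $n=21$ one can ``tune'' to a plane cubic meeting a plane quintic to obtain ``the sharper bound $\RD(21)\le 15$'' misreads the abstract. The bound $\RD(21)\le 15$ is simply $21-6$, coming from the dimension of the reduced family of polynomials, not from the degree of any auxiliary equation; lowering the auxiliary degree from $20$ to $15$ would not sharpen the resolvent-degree conclusion.
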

This implies $\RD(n) \leq n-6$ for $n \leq 21$.

We now give an outline of the remaining sections of the paper. In Section \ref{background} we give a brief historical overview of the classical theory of Tschirnhaus transformations and some of the important results therein. In Section \ref{sylvester} we turn to Sylvester's work in particular and give a description of his ``method of obliteration'': a technique for finding solutions of systems of polynomial equations when the number of variables is large relative to the number of equations, which lies at the heart of his work on Tschirnhaus transformations. In Section \ref{rd} we discuss resolvent degree and consider the $n = 5$ case in detail as an illustration of the relation between resolvent degree and the classical theory of Tschirnhaus transformations. Section \ref{linear} describes a key ingredient of our main result: the use of the ``method of obliteration'' to determine linear subspaces of hypersurfaces. Finally, in Section \ref{actual_proof} we give the proof of our main theorem.

\section{Some Background on Tschirnhaus Transformations}\label{background}

\subsection{Preliminaries}
Let $p(x)$ be a polynomial over a (not necessarily closed) field $K$. Then we have
$$p(x) = x^n+a_1x^{n-1}+\ldots+a_{n-1}x+a_n = \prod_{i=1}^n (x-x_i)$$
for some coefficients $a_1, \ldots, a_n \in k$ and roots $x_1, \ldots, x_n \in \overline{K}$. Fixing some algebraic extension $L$ of $K$, a \emph{Tschirnhaus transformation} is a polynomial $T \in L[x]$,
$$T(x) = b_0+b_1x+\ldots+b_{n-1}x^{n-1}$$
which we apply to the roots of $p$ to obtain a new polynomial
$$q(y) = \prod_{i=1}^n (y-T(x_i)) = y^n+A_1y^{n-1}+\ldots+A_1y+A_0.$$
Note that the coefficients $A_0, \ldots, A_n$ are symmetric functions of the transformed roots $T(x_1), \ldots, T(x_n)$, and hence are symmetric in $x_1, \ldots, x_n$. The coefficients $a_0, \ldots, a_n$ generate the algebra of symmetric polynomials in $x_1, \ldots, x_n$, so we can determine the $A_i$ polynomially in terms of $a_0, \ldots, a_n$ and $b_0, \ldots, b_{n-1}$. In particular, this implies $q \in L[y]$. The precise constraints on the field $L$ from which the coefficients of $T$ may be taken vary from author to author but generally one considers towers of extensions of bounded degree.
Tschirnhaus's original goal was to find a formula in radicals for the roots of $p$. More precisely, taking $L$ to be the solvable closure of $K$, his strategy was to determine a Tschirnhaus transformation $T \in L[x]$ which transforms $p$ into the solvable form $q(y) = y^n+A_n$.  If $y_i$ is a root of $q$, we can find the corresponding root(s) of $p$ (i.e., those $x_j$ such that $T(x_j) = y_i$) as the roots of
$$\mathrm{GCD}(p(x), T(x)-y_i).$$
If there are $d$ roots of $p$ which map to $y_i$, this will be a degree $d$ polynomial. In particular, if $q$ has distinct roots, then the roots of $p$ can be recovered rationally from the roots of $q$. Thus if $q$ is solvable and $T$ has coefficients in a solvable extension of $K$, then $p$ will in general be solvable. (And at worst recovering the roots of $p$ will require the solution of an equation of degree $n-1$.) This strategy fails because it is not generally possible to determine $T$ solvably such that all the intermediate terms $A_1, A_2, \ldots, A_{n-1}$ of $q$ vanish. On the other hand, there are weaker versions of Tschirnhaus's problem which are tractable. More precisely, we can ask: what conditions on $n$ and $L$ are sufficient for there to exist $T$ in $L[x]$ transforming the degree $n$ polynomial $p \in K[x]$ to a polynomial $q \in L[y]$ with $A_1 = \ldots = A_k = 0$? That is, we consider the problem of setting some (rather than all) of the intermediate coefficients to zero (thus reducing the number of parameters of the family of polynomials), and we relax the requirement that $L$ necessarily be a solvable extension.

We now consider in more detail what is necessary to determine $T$ such that $A_1 = \ldots = A_k = 0$. As we remarked above, $A_i$ is a polynomial function of $a_1, \ldots, a_n$ and $b_0, \ldots, b_{n-1}$. Treating the $b_i$'s as unknowns to be determined, we further observe that $A_i$ is homogeneous of degree $i$ in the variables $b_0, \ldots, b_{n-1}$. For example,
\begin{align*}
A_1 &= -\sum_{i=1}^n T(x_i)\\
&= -\sum_{i=1}^n (b_{n-1}x_i^{n-1}+b_{n-2}x_i^{n-2}+\ldots+b_1x_i+b_0)\\
&= \left(-\sum_{i=1}^n x_i^{n-1}\right)b_{n-1}+\left(-\sum_{i=1}^n x_i^{n-2}\right)b_{n-2}+\ldots+\left(-\sum_{i=1}^n x_i\right)b_{1}-nb_0
\end{align*}
we can ensure $A_1 = 0$ as long as the coefficients $b_0, \ldots, b_{n-1}$ of $T$ are chosen to satisfy a single homogeneous linear equation. (The coefficients of this equation are symmetric in the $x_i$, so again can be expressed in terms of the coefficients $a_1, \ldots, a_n$ of $p$; it is not necessary to know the roots $x_i$.) More generally, $A_i$ vanishes if and only if the $i$th symmetric function of $T(x_1), \ldots, T(x_n)$ vanishes, so to find a Tschirnhaus transformation $T$ such that $A_i = 0$ in the transformed polynomial we need to solve a degree $i$ polynomial in $b_0, \ldots, b_{n-1}$ whose coefficients are polynomials in $a_1, \ldots, a_n$. Thus, to determine $T$ we must find a point in $\mathbb{P}^{n-1}_{\mathbf{b}}$ on the intersection of the hypersurfaces $V(A_1), V(A_2), \ldots, V(A_k)$, which are of degree $1, 2, \ldots, k$, respectively.

\subsection{Some Results on Tschirnhaus Transformations from 1683 to Present}
Tschirnhaus, in his 1683 paper introducing these transformations, claimed to be able to solve any degree $n$ polynomial by removing \emph{all} intermediate terms.\cite{von1683methodus} That is, he claimed one could find a transformation $T$ such that
$$A_1 = A_2 = \ldots = A_{n-1} = 0$$
and the transformed polynomial has the solvable form
$$y^n+A_n = 0.$$
The problem with this strategy is that finding the necessary $T$ requires the solution of a system of polynomial equations of degrees $1, 2, \ldots, n-1$. In general this leads to an equation of degree $(n-1)!$, so for $n > 3$ finding $T$ is a priori more complicated than finding the roots of the original degree $n$ polynomial. Tschirnhaus did show explicitly how to find $T$ in the $n = 3$ case, and his proof readily generalizes to removing the first two intermediate terms from any degree $n$ polynomial.

Finding a Tschirnhaus transformation $T$ yielding $A_1 = A_2 = \ldots = A_k = 0$ requires solving a system of $k$ equations in $n$ variables, of degree $1, 2, 3, \ldots, k$. In the worst case this leads to an equation of degree $k!$, but when the system is sufficiently undetermined (i.e., $n$ much larger than $k$), a solution can be found without solving any polynomial of degree greater than $k$. The first result of this kind was given in 1786 by Bring, who showed that when $n \geq 5$, we can find $T$ such that $A_1 = A_2 = A_3 = 0$ without solving any equation of degree higher than three.\cite{chen2017erland}

In 1834, Jerrard recovered independently Bring's result that removal of three terms from a degree 5 polynomial was possible by means of a Tschirnhaus transformation.\cite{Jerrard} In fact, Jerrard went on to claim that his methods could yield a reduction of a general quintic to a \emph{solvable} form.\cite{jerrard1835xxiv} (Jerrard was aware of, but did not accept, Abel's 1824 work on the insolvability of the quintic.) 

Shortly thereafter Hamilton was commissioned by the British Association for the Advancement of Science to investigate the validity of Jerrard's methods, issuing his report in 1837.\cite{hamilton1836inquiry} He showed that Jerrard's reductions were in many cases ``illusory". 
Jerrard allowed Tschirnhaus transformations of degree potentially as high or higher than the original polynomial $p$, and Hamilton demonstrated that in the ``illusory" cases the transformation $T$ found by Jerrard was a multiple of $p$. In this case the transformed polynomial is always $q(y) = y^n$ (the same as if the ``transformation" $y = T(x) = 0$ were permitted) for any degree $n$ polynomial $p$, so there is no hope of determining the roots of $p$ by studying $q$. On the other hand, Hamilton showed that Jerrard's methods can be a made to work --and the ``illusory" cases avoided -- in higher degrees. More precisely, for any fixed $k$, one can find a (nonzero, degree $\leq n-1$) Tschirnhaus transformation $T$ yielding $A_1 = A_2 \ldots = A_k = 0$ without solving any equation of degree higher than $k$, provided the degree $n$ of the original polynomial $p$ is large enough relative to $k$. In particular, for $k = 4$, Hamilton showed $n \geq 11$ suffices, and for $k = 5$, $n \geq 47$. 

In 1886, Sylvester published a geometric explanation of the Hamilton/Jerrard method, sharpened some of the bounds (finding, in particular, $n \geq 10$ suffices for $k = 4$, and $n \geq 44$ for $k = 5$).\cite{sylvester1887so} Sylvester claimed that these sharpened bounds are optimal if no ``elevation of degree" is allowed -- that is, if no equations of degree higher than $k$ are to be solved. This claim requires careful interpretation -- Sylvester does not consider the possibility of higher degree polynomials arising but factoring into lower degree terms. This is known to happen for $k = 3$: if one attempts to remove three terms from a degree $n$ polynomial, the system of equations of degree 1, 2, and 3 leads in general to an equation for degree 6. Sylvester's method is able to avoid this elevation of degree only when $n \geq 5$. For $n = 4$, Sylvester's method is inapplicable, but Lagrange had shown (in 1771, more than 100 years prior) that the degree 6 equation that arises in this case always factors into a pair of cubics, so that the necessary Tschirnhaus transformation can in fact be determined without solving any equation of degree higher than 3.\cite{lagrange1771reflexions}

Further reductions have been achieved, but all loosen or ignore the ``elevation of degree" restriction in one way or another. Viewing the degree 9 polynomial
$$p(x) = a_0x^9+a_1x^8+\ldots+a_8x+a_9$$
as a multi-valued ``algebraic function" sending $(a_0, \ldots, a_9)$ to the set of roots of $p$, Hilbert asked whether this could be written as a composition of algebraic functions of at most 4 variables.\cite{hilbert1927gleichung} As part of his investigation, Hilbert sketched a method for finding a Tschirnhaus transformation such that $A_1 = A_2 = A_3 = A_4 = 0$ when $n \geq 9$. Hilbert's method requires finding a line on a cubic surface. This in turn requires the solution of an equation of degree 27. However, by putting the equation of the cubic surface into a four-parameter canonical form (the ``pentahedral form", originally suggested by Sylvester), Hilbert was able to show that the coefficients of the necessary line were themselves algebraic functions of four variables, so this was sufficient for his purposes. 

In 1945, B. Segre strengthened Hilbert's result, describing an algorithm for finding a Tschirnhaus transformation removing 4 terms from a degree $n \geq 9$ polynomial, without solving any equation of degree higher than 5.\cite{segre1945algebraic} The same result was later proven by Dixmier using different methods.\cite{dixmier1993histoire} Also building on Hilbert's work were Wiman, who sketched an alternative proof of the $n = 9$ bound for $k = 4$, and G. Chebotarev, who extended Wiman's ideas to sketch a proof that $n = 21$ for $k = 5$.\cite{chebotarev1954problem, wiman1927anwendung, sutherland2021gn} In 1975, Brauer showed that, when $n > k!$, a degree $n$ polynomial can be written as a composition of algebraic functions of $(n-k-1)$ variables; in this framing of the problem it is permissible to simply solve the degree $k!$ to find a Tschirnhaus transformation removing $k$ terms, since this will be an algebraic function of fewer variables than the original degree $n$ polynomial.\cite{brauer1975resolvent} For $k\geq 7,$ this $n > k!$ bound is lower than the corresponding bounds for removal of $k$ terms found by Sylvester, though it should again be emphasized that this ignores the ``no elevation of degree`` constraint which is explicit in Sylvester, and implicit in pre-Sylvester work on the problem.


 In 2020, Wolfson described a function $F(k)$ such that a degree $n$ polynomial can be put in an $(n-k-1)$-parameter form whenever $n \geq F(k)$, and which improved significantly on Brauer's bounds.\cite{wolfson2021tschirnhaus} Wolfson frames the problem in terms of the theory \emph{resolvent degree}, which allows bounds on the number of necessary parameters for a family of algebraic functions to be computed based on the dimensions of certain spaces. For $k = 5$, Wolfson shows $n \geq 41$, which also improves on Sylvester's bound of $n \geq 44$. To find the Tschirnhaus transformation accomplishing the reduction in this case requires, among other things, finding a 2-plane inside of a cubic hypersurface in $\mathbb{P}^6$. Informally, since the dimension of the moduli space of cubic hypersurfaces in $\mathbb{P}^6$ is 35, finding a 2-plane inside such a hypersurface is at worst a 35-parameter problem, so is permissible when putting a degree 41 polynomial into 35-parameter form. This approach does not produce the degrees of the equations which must be solved to find the Tschirnhaus transformation (which may be higher than $n$, as in Hilbert's proof that $n = 9$ suffices for $k = 4$, where it was necessary to solve a degree 27 equation to find a line on a cubic surface).

Most recently, Sutherland used a combination of ideas from the classical theory of polarity together with optimizations to the moduli-dimension-counting method to improve on the $F(k)$ bound for $k = 5, \ldots, 15$ and all $k \geq 18$.\cite{sutherland2021upper} In particular, for $k = 5$, Sutherland shows that $n = 21$ suffices, giving the first rigorous proof of the bound first claimed by Chebotarev.

In the current note, we show that the $n = 21$ result can be recovered from Sylvester's method if partial elevation of degree is allowed. More precisely, one can find a Tschirnhaus transformation removing 5 terms (i.e., such that the coefficients of the transformed polynomial satisfy $A_1 = A_2 = A_3 = A_4 = A_5 = 0$) from a degree $n$ polynomial whenever $n \geq 21$, by solving no equation of degree higher than 20. A key ingredient is that there exists an explicit algorithm to find a 2-plane on a cubic hypersurface in $\mathbb{P}^9$, without solving any equation of degree higher than 5.

\section{Sylvester's Obliteration Formula}\label{sylvester}
Though motivated by the problem of finding Tschirnhaus transformations, the procedure Sylvester describes in \cite{sylvester1887so} is remarkably general, allowing for a solution to be found to any sufficiently underdetermined system, without elevation of degree. More precisely, given a system of equations $S$ of polynomial equations in $N$ variables  of degree at most $k$, with $n_i$ the number of equations of degree $i$, Sylvester shows there is a function $l(n_1, \ldots, n_k)$ such that, if $N > l(n_1, \ldots, n_k)$, then a solution to $S$ can be found without solving any equation of degree greater than $k$.

The method is as follows: Pick any $f$ of maximal degree $k$ from the system $S$. We can find a solution to $S$ by first finding a line $L$ contained in the solution set of the subsystem $S' = S \setminus \{f\}$, then intersecting this line with the vanishing locus of $f$. To find $L$, first find a solution $Q = (q_0, \ldots, q_N)$ to $S'$. Then to get the required line it suffices to find a point $P = (p_0, \ldots, p_N)$ such that $P+\lambda Q$ is a solution to $S'$ for all $\lambda \in \mathbb{C}$. For any equation $g \in S'$, then, view $g(P+\lambda Q)$ as a polynomial in $\lambda$; if $\deg(g) = d$, the coefficients of $1, \lambda, \ldots, \lambda^{d-1}, \lambda^d$ must vanish identically. In fact the coefficient of $\lambda^d$ is just $g(Q)$, so vanishes since we chose $Q$ to be a solution of $S'$. The vanishing of the remaining coefficients imposes polynomial conditions on the $p_0, \ldots, p_N$ of degrees $1, 2, \ldots, d$. Ranging over all $g \in S'$, we see that $P$ must be a solution to a system $S''$ with $m_i$ equations of degree $i$, where
\begin{align*}
m_k &= n_k-1\\
m_{k-1} &= n_k+n_{k-1}\\
m_{k-2} &= n_k+n_{k-1}+n_{k-2}\\
&\ \vdots\\
m_1 &= n_k+n_{k-1}+\ldots+n_1.
\end{align*}
Now to find this needed point solution, we proceed inductively: again hold aside some polynomial of highest degree (say $h$), and find a linear solution to the subsystem $S'' \setminus \{h\}$, then intersect that line with $h$, and so on. The number of equations of maximal degree decreases by 1 with each step, so eventually we are left with a (possibly very large) system of linear equations; a line in the solution set to this system can be found provided the number of variables is (at least) one greater than the number of equations. Then to backtrack to a linear solution to the original system requires only for equations of degree $\leq k$ to be solved one at a time.

The core reduction is succinctly summarized by Sylvester's formula of obliteration, which we will refer to repeatedly:
\begin{proposition}[Sylvester]
Given $n_1, \ldots, n_k \in \mathbb{N},$ let
$$[n_k, n_{k-1}, \ldots, n_2, n_1]$$
denote the minimum number of variables such that a linear solution to any system of equations with exactly $n_i$ equations of degree $i$ can be found without elevation of degree. Then
$$[n_k, n_{k-1}, \ldots, n_2, n_1] \leq 1+[m_k, m_{k-1}, \ldots, m_2, m_1]$$
where
$$m_i = \begin{cases} \sum_{j=i}^k n_j & i \neq k \\ n_k-1 & i = k.\end{cases}$$
\end{proposition}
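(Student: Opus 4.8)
The plan is to promote the ``method of obliteration'' sketched above into a formal strong induction, with induction parameter the pair (largest degree occurring in the system, number of equations of that degree), ordered lexicographically. So fix a system $S$ having exactly $n_i$ equations of degree $i$ and living in $N := 1 + [m_k, m_{k-1}, \ldots, m_1]$ variables; the goal is to produce a line inside the common zero locus $V(S)$ using only root extractions of polynomials of degree at most $k$. First I would dispose of the trivial cases: if $n_k = 0$ the leading slot may be dropped, and once every equation is linear a line through the common zero locus is produced directly by Gaussian elimination as soon as the ambient dimension exceeds the number of equations. So assume $n_k \ge 1$, pick some $f \in S$ with $\deg f = k$, and set $S' := S \setminus \{f\}$, a system with profile $(n_k - 1, n_{k-1}, \ldots, n_1)$.

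The core is the reduction step. By the inductive hypothesis applied to $S'$ — whose lexicographic parameter is strictly smaller — together with the (immediate) monotonicity of $[\,\cdot\,]$ in each argument, the $N$ available variables already suffice to produce a single point $Q \in V(S')$. Now for a second point $P$ and each $g \in S'$ with $\deg g = d$, expand $g(P + \lambda Q)$ as a polynomial in $\lambda$: its coefficient of $\lambda^d$ is $g(Q)$, which vanishes since $Q \in V(S')$, while the coefficients of $\lambda^{d-1}, \lambda^{d-2}, \ldots, \lambda^{0}$, read as polynomials in $P$ with $Q$ held fixed, are the successive polars of $g$ and have degrees $1, 2, \ldots, d$ in $P$. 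Requiring all of these to vanish confines $P$ to the zero locus of a derived system $S''$ whose degree profile I would then read off: each of the $n_k - 1$ surviving degree-$k$ equations of $S'$ contributes one equation of degree $k$ to $S''$, and each degree-$j$ equation of $S'$ contributes one equation of every degree up to $j$; hence $S''$ has exactly $m_k = n_k - 1$ equations of degree $k$ and at most $\sum_{j=i}^{k} n_j = m_i$ equations of degree $i$ for each $i < k$. By monotonicity again, $S''$ is no harder than the system measured by $[m_k, \ldots, m_1] = N-1$, so the $N$ variables are more than enough to find a line in $V(S'')$, hence a point $P \in V(S'')$ distinct from $Q$ (note $Q$ itself lies in $V(S'')$, so the spare dimension is what lets one step off it). Given such a $P$, the line $L := \{\, P + \lambda Q : \lambda \in \mathbb{C} \,\} \cup \{Q\}$ is genuine and is contained in $V(S')$ by construction; intersecting $L$ with $V(f)$ amounts to solving one polynomial of degree at most $k$ in the pencil parameter, and this delivers the required solution of $S$ with no elevation of degree.

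Assembling these pieces gives the inequality: the single extra variable beyond $[m_k, \ldots, m_1]$ is precisely the cost of the one peeling step performed, and, on unrolling the recursion, these ``$+1$''s accumulate to exactly the number of variables one must pay above the linear base case. I expect the real obstacle to be the bookkeeping around this reduction rather than any single computation: one has to check with care (i) that ``find a line in $V(S')$'' legitimately decomposes as ``find a point $Q$, then solve the derived system $S''$ for the pencil direction'', with the variable count matching exactly, and (ii) that the recursion, which alternates between ``find a point'' and ``find a line'' subproblems (for the auxiliary $Q$'s and $P$'s), is well-founded and terminates at the purely linear case with the stated count. The algebraic input — that the $\lambda$-coefficients of $g(P+\lambda Q)$ are polars of $g$ of the asserted degrees in $P$ — is routine, and is where I would be briefest.
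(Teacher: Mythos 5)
Your write-up is a faithful formalization of the informal recipe in Section \ref{sylvester}, but in making it precise you have inherited, and exposed, a genuine gap: the bracket $[n_k,\ldots,n_1]$ measures the number of variables needed to find a \emph{line} inside $V(S)$ (this is forced by the base case you yourself use, $[n_1]=n_1+1$, and by the paper's computations $[2]=3$, $[6]=7$), yet your inductive step delivers only a \emph{point} of $V(S)$. The line you construct, $\overline{PQ}$, lies in $V(S')=V(S\setminus\{f\})$; intersecting it with $V(f)$ collapses it to finitely many points, and ``the required solution of $S$'' you return is one of those points. The induction therefore does not close on the statement being proved. The case $S=\{f\}$ with $\deg f=k$ makes this vivid: there $S'$ and $S''$ are empty, $Q$, $P$ and $L$ are arbitrary, and your procedure outputs merely a point of the hypersurface $V(f)$ --- available already in $\mathbb{P}^1$ --- whereas $[1,0,\ldots,0]$ is by definition the dimension needed to place an entire line on $V(f)$. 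Two symptoms inside your own text confirm the mismatch: your count of $S''$ comes out to $m_i-1$ equations of degree $i<k$ (you write ``at most $m_i$''), not $m_i$; and nothing in your argument actually consumes the extra variable, since finding $Q$ costs $[n_k-1,n_{k-1},\ldots,n_1]\le[m_k,\ldots,m_1]$ variables, finding a line in $V(S'')$ costs at most $[m_k,\ldots,m_1]$, and once you hold a whole line in $V(S'')$ a point $P\ne Q$ on it is free. So your bookkeeping proves a bound of $[m_k,\ldots,m_1]$, with no ``$+1$'', for the wrong statement.

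The repair is to run the polar construction on $S$ itself. Find $Q\in V(S)$ (costing $[n_k-1,n_{k-1},\ldots,n_1]\le[m_k,\ldots,m_1]$ variables by monotonicity), then seek $P$ with $g(P+\lambda Q)\equiv 0$ for \emph{every} $g\in S$, including $f$. That polar system has $\sum_{j\ge i}n_j$ equations of degree $i$ for each $i$, with top entry $n_k$; peeling off one degree-$k$ member leaves a system of profile exactly $(m_k,m_{k-1},\ldots,m_1)$ --- this is where the stated values $m_i=\sum_{j\ge i}n_j$, rather than your $m_i-1$, come from --- and intersecting a line in that system with the peeled equation produces $P$, whence $\overline{QP}\subset V(S)$ is the desired linear solution. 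The ``$+1$'' is now genuinely needed: since $Q$ satisfies every polar equation, one must be able to move the auxiliary line so that the resulting $P$ differs from $Q$. The system consisting of one quadric and one hyperplane shows this is not automatic: in $\mathbb{P}^{[m_2,m_1]}=\mathbb{P}^3$ the auxiliary line degenerates to the tangent line of the conic $V(g_1)\cap V(g_2)$ at $Q$ and meets it only at $Q$, while in $\mathbb{P}^4$ the spare dimension lets one step off, matching $[1,1]\le 1+[2]=4$. In fairness, the prose of Section \ref{sylvester} describes the same point-producing recursion you followed, and its stated $m_i$'s only match the line-producing version above; your proposal is an accurate formalization of an imprecise sketch rather than a proof of the proposition as stated.
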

Note that applying the obliteration formula $n_k$ times yields a system with no equations of degree $k$, so all equations of the highest degree can be removed. This can be repeated until only (a large number of) linear equations remain, in which case the minimum number of variables needed is easy to determine. For example, consider the problem of finding a line on a cubic hypersurface. We have a system of equations with 1 equation of degree 3, 0 equations of degree 2, and 0 of degree 1. By repeated application of the obliteration formula,
$$[1, 0, 0] \leq 1+[1, 1] \leq 2+[2] = 2+3 =  5,$$
so it is possible to find a line solvably (in fact, by solving no equation of degree greater than 3) on a cubic hypersurface in $\mathbb{P}^5$.

\section{Resolvent Degree}\label{rd}
\subsection{Definitions}
Let $k$ be an algebraically closed field and let $X \to Y$ be a generically finite dominant map of $k$-varieties. Following Buhler and Reichstein \cite{buhler1999tschirnhaus}, we define the \emph{essential dimension} of this map, $\mathrm{ed}(X \to Y)$, to be the minimal $d$ such that there exists a pullback square
\[
\begin{tikzcd}
X \arrow[rr] \arrow[dd] &  & W \arrow[dd] \\
                        &  &              \\ 
Y \arrow[rr]            &  & Z           
\end{tikzcd}
\]
with $\dim Z \leq d$. The idea of resolvent degree is to extend this notion to allow towers of pullbacks; essentially, a map is of resolvent degree at most $d$ if it can be written as a composition of maps each of essential dimension at most $d$. More precisely, following Farb and Wolfson \cite{farb2020resolvent}, the \emph{resolvent degree}, $\mathrm{RD}(X \to Y)$, is defined to be the minimal $d$ such that there exists a tower
\[ E_r \to \cdots \to E_1 \to E_0 = U, \]
where $U$ is an open subset of $Y$, $E_r \to U$ factors through a dominant map $E_r \to X$, and $\mathrm{ed}(E_i \to E_{i-1}) \leq d$ for each $i$. It follows from these definitions that
\[
\mathrm{RD}(X \to Y) \leq \mathrm{ed}(X \to Y) \leq \dim Y.
\]
It is convenient, in proving results on resolvent degree, to construct towers of maps of bounded resolvent degree (rather than essential dimension). Lemma 2.7 in Farb and Wolfson \cite{farb2020resolvent} shows that this is sufficient.

The connection to polynomials and their roots is as follows. Let $\mathcal{P}_n$ denote the space of monic degree $n$ polynomials with coefficients in $k$ and let
\[
\widetilde{\mathcal{P}_n} = \{(p, r) \in \mathcal{P}_n \times \mathbb{A}^1_k \mid p(r) = 0\},
\]
the space of monic polynomials together with a choice of root. We then define
\[
\mathrm{RD}(n) := \RD(\widetilde{\mathcal{P}_n} \to \mathcal{P}_n)
\]
where the map is ``forgetting the root". $\RD(n)$ captures the complexity of the root cover in the sense that if there is a formula in terms of algebraic functions of at most $d$ variables for finding the roots of a degree $n$ polynomial in terms of its coefficients, then $\RD(n) \leq d$.

\subsection{Resolvent Degree of a Dominant Map}
It is necessary to extend the definition of resolvent degree to dominant maps of $k$-varieties which are not necessarily generically finite. Although we are principally interested in $\RD(n) = \widetilde{\mathcal{P}_n} \to \mathcal{P}_n$, which is generically finite, a number of maps which arise naturally in the study of Tschirnhaus transformations are not. For example, a key step in Bring's proof that a general quintic can be reduced to a one-parameter form involves finding a line on a quadric surface in $\mathbb{P}^3$. Thus we would like to study
$$\mathcal{H}_{2,3}^1 \to \mathcal{H}_{2,3}$$
where $\mathcal{H}_{2, 3}$ is the parameter space of quadric surfaces in $\mathbb{P}^3$ and $\mathcal{H}_{2,3}^1$ is the space of such quadrics together with a choice of line on the surface, and the map is ``forgetting the line''. Since any quadric surface contains an infinite family of lines, this map is not generically finite. We would like to extend the notion of resolvent degree so that $\RD(\mathcal{H}_{2,3}^1 \to \mathcal{H}_{2,3})$ captures the complexity of finding lines on quadric surfaces in the same way that $\RD(\widetilde{\mathcal{P}_n} \to \mathcal{P}_n)$ captures the complexity of finding roots.

This is done in \cite{wolfson2021tschirnhaus} in the following way: given a dominant map $\pi: X \to Y$, define the \emph{resolvent degree} $\RD(X \to Y)$ to be the minimum $d$ for which there exists a dense collection of subvarieties $\{U_\alpha \subset X\}$ with 
$$\pi|_{U_{\alpha}}: U_\alpha \to Y$$ 
a generically finite dominant map and $\RD(U_\alpha \to Y) \leq d$ for all $\alpha.$ (Such a subvariety $U_\alpha$ is called a \emph{rational multi-section} for $\pi$.)

Requiring the collection of multisections to be dense in $X$ is necessary to ensure that resolvent degree behaves well with respect to composition of dominant maps: given $X \to Y$ and $Y \to Z$ dominant, for the composition $X \to Z$ we have $\RD(X \to Z)  \leq \max\{\RD(X \to Y), \RD(Y \to Z)\}$ by \cite[Lemma~4.10]{wolfson2021tschirnhaus}. Having multi-sections $V \to Y$ for $X \to Y$ and $U \to Z$ for $Y \to Z$ is not generally enough to product a multisection for $X \to Z$ (the range of $V \to Y$ could entirely miss $U$). On the other hand, if $V \to Y$ is \emph{surjective}, then we do obtain a multi-section of $X \to Z$. 

Returning to the example of lines on quadrics, an algorithm for determining a line (or finite set of lines) on each quadric surface in terms of algebraic functions of degree $d$ gives a multi-section $U \to \mathcal{H}_{2,3}$ with $\RD(U \to \mathcal{H}_{2,3})\leq \RD(d)$, but this is somewhat less than what is required to show that $\RD(\mathcal{H}_{2,3}^1 \to \mathcal{H}_{2,3}) \leq \RD(d)$. On the other hand, as we shall see in the next section, determining any one line is sufficient to yield a Tschirnhaus transformation reducing the quintic to a one-parameter form, so that $\RD(5) = 1$. 

\subsection{Resolvent Degree of the Quintic}
As an illustrative example, we now give a detailed proof that (as observed in \cite{farb2020resolvent}) Bring's work on Tschirnhaus transformations of quintic polynomials implies $\RD(5)= 1$. (Recall that by definition, $\RD(5) = \RD(\widetilde{\mathcal{P}_5} \to \mathcal{P}_5)$, where $\mathcal{P}_5$ is the parameter space of monic degree 5 polynomials and $\widetilde{\mathcal{P}_5}$ is the space of such polynomials together with a choice of root.)

Recall that given a polynomial $p(x) = x^5+a_1x^4+a_2x^3+a_3x^2+a_4x+a_5$ in $\mathcal{P}_5$ and a Tschirnhaus transformation of the form $T(x) = b_0x^4+b_1x^3+b_2x^2+b_3x+b_4$, we can set $y = T(x)$ and eliminate $x$ to obtain a polynomial 
$$q(y) = y^5+A_1y^4+A_2y^3+A_3y^2+A_4y+A_5,$$
where the coefficient $A_i$ is a degree $i$ homogeneous polynomial in $b_0, b_1, b_2, b_3, b_4$, whose coefficients are integral functions of $a_0, a_1, a_2, a_3, a_4, a_5$.

Our strategy will be to construct a tower of maps
$$X_5 \to X_4 \to X_3 \to X_2 \to X_1 \to \mathcal{P}_5$$
such that the resolvent degree of each map is 1 and there is a dominant rational map $X_5 \to \widetilde{\mathcal{P}_5}$. Informally, in $X_1$ there will be associated to each quintic polynomial $p$ the equation of a hyperplane determining the set of Tschirnhaus transformations such that $A_1 = 0$. In $X_2$, we will further have the information of a quadric $Q$ contained in this hyperplane, corresponding to $A_1 = A_2 = 0$, and the equation of a line $l$ contained in $Q$. In $X_3$ we will further have a choice of Tschirnhaus transformation $T$ which lies on $l$ while also satisfying $A_3 = 0$. Using this information we can construct a map from $X_3$ to the one-dimensional space of quintic polynomials of the form $z^5+Az+1$, and we will take $X_4$ to be the pullback of the root cover of this space, so that $X_4$ associates to $p$ a root of the polynomial obtained by applying $T$ to $p$. Finally, in constructing $X_5$, we recover the roots of $p$ itself from the roots of the transformed polynomial. Each step of this process is of bounded resolvent degree.

More precisely: viewing the $b_i$ as homogeneous coordinates on $\mathbb{P}^4$, there is associated to each polynomial $p \in \mathcal{P}_5$ a hyperplane $H \subset \mathbb{P}^4_{\mathbf{b}}$ consisting of all Tschirnhaus transformations that send $p$ to a polynomial $q(y)$ with $A_1 = 0$. Let $X_1$ denote the space of ordered pairs $(p, H)$ with $p$ and $H$ as just described. Then the map $X_1 \to \mathcal{P}_5$ is birational, hence has resolvent degree 1.

Next, we consider the set of all Tschirnhaus transformations such that $A_1 = 0$ and $A_2 = 0$ in the transformed polynomial. The latter is a degree 2 homogeneous polynomial in $b_0, b_1, b_2, b_3, b_4$, so to each polynomial $p$ there is associated a quadric surface $Q \subset H \cong \mathbb{P}^3$ whose points are Tschirnhaus transformations satisfying both conditions. This defines a map
$$X_1 \to \mathcal{H}_{2,3}$$
where $\mathcal{H}_{2,3}$ is the parameter space of quadric surfaces in $\mathbb{P}^3$. Letting $\mathcal{H}_{2,3}^1$ denote the space of such surfaces together with a choice of line, we consider the map
$$\mathcal{H}_{2,3}^1 \to \mathcal{H}_{2,3}.$$
An algorithm exists for finding a line on a quadric surface that requires solving only a single quadratic equation, so there is a rational multi-section $U \to \mathcal{H}_{2,3}$ with 
$$\RD(U \to \mathcal{H}_{2,3}) \leq \RD(2) = 1.$$
(In fact there is a dense collection of such multi-sections, so $\RD(\mathcal{H}_{2,3}^1 \to \mathcal{H}_{2,3}) = 1$, but this is stronger than we require.)

We define $X_2$ via the pullback square
\[
\begin{tikzcd}
X_2 \arrow[d] \arrow[r]& X_1\arrow[d] \\
U \arrow[r] & \mathcal{H}_{2,3}
\end{tikzcd}
\]
so that $\RD(X_2 \to X_1) \leq \RD(U \to \mathcal{H}_{2,3}) = 1$. Points of $X_2$ are ordered tuples $(p, H, Q, l)$, where $p$ is a quintic polynomial, $H$ is the space of all Tschirnhaus transformations that send $p$ to a polynomial with $A_1 = 0$, $Q \subset H$ is the quadric surface whose points are Tschirnhaus transformations such that $A_1 = 0$ and $A_2 = 0$, and $l$ is a line contained in $Q$.

Finally, to find a Tschirnhaus transformation $T$ satisfying $A_1 = 0$, $A_2 = 0$, and $A_3 = 0$ requires intersecting the cubic hypersurface defined by $A_3 = 0$ with the line $l$. To find the three points of intersection requires only the solution of a cubic equation. Defining $X_3$ to be the space of tuples $(p, H, Q, l, T)$, with $p, H, Q, l$ as above and $T$ a Tschirnhaus transformation yielding $A_1 = A_2 = A_3 = 0$, the projection map
$$X_3 \to X_2$$
has resolvent degree $RD(3) = 1$.

Now let $\mathcal{P}_5'$ denote the space of quintic polynomials of the form $z^5+Az+1$. Given a point $(p, H, Q, l, T)$ in $X_3$, we can apply the transformation $T$ to $p$ to obtain a polynomial
$$q(y) = y^5+A_4y+A_5.$$
Applying a change of variables $y = \sqrt[5]{A_5}z$ and dividing by $A_5$ yields
$$z^5+\frac{A_4\sqrt[5]{A_5}}{A_5}z+1$$
so this procedure defines a map $X_3 \to \mathcal{P}_5'$. Now, the root cover
$$\widetilde{\mathcal{P}_5'} \to \mathcal{P}_5'$$
has resolvent degree
$$\RD(\widetilde{\mathcal{P}_5'} \to \mathcal{P}_5') \leq \dim\left(\mathcal{P}_5'\right) = 1,$$
so defining $X_4$ via the pullback square

\[
\begin{tikzcd}
X_4 \arrow[d] \arrow[r]& X_3\arrow[d] \\
\widetilde{\mathcal{P}_5'} \arrow[r] & \mathcal{P}_5'
\end{tikzcd}
\]
we have $\RD(X_4 \to X_3) = 1$. Points of $X_4$ are of the form $(p, H, Q, l, T, \lambda)$, where $(p, H, Q, l, T) \in X_3$ and $\lambda$ is a root of the transformed polynomial $z^5+\frac{A_4\sqrt[5]{A_5}}{A_5}z+1$. Let $X_5$ be the space whose points are tuples $(p, H, Q, l, T, \mu)$, where $\mu$ is a root of $p$. There is a map
$$X_5 \to X_4$$
defined by
$$(p, H, Q, l, T, \mu) \mapsto \left(p, H, Q, l, T, \frac{T(\mu)}{\sqrt[5]{A_5}}\right).$$

On the other hand, given a root $\lambda$ of the transformed polynomial, we can find the corresponding roots of $p$ by solving the (at worst degree 4) polynomial equation
$$\mathrm{GCD}(p(x), \sqrt[5]{A_5}T(x)-\lambda) = 0$$
so $\RD(X_5 \to X_4) = \RD(4) = 1$. 

In all, we have constructed a tower of maps
$$X_5 \to X_4 \to X_3 \to X_2 \to X_1 \to \mathcal{P}_5$$
each of which has resolvent degree 1. Since the root cover $\widetilde{\mathcal{P}_5} \to \mathcal{P}_5$ factors through the projection $X_5 \to \widetilde{\mathcal{P}_5}$, $(p, H, Q, l, T, \mu) \mapsto (p, \mu)$, we have
$$\RD(5) = \RD(\widetilde{\mathcal{P}_5} \to \mathcal{P}_5) = 1.$$

\section{Linear subspaces of hypersurfaces}\label{linear}
To prove $\RD(5) = 1$ we needed to show one can always find a Tschirnhaus transformation which eliminates the first three intermediate terms of a quintic polynomial. This required finding a solution of a system of three equations of degree 1, 2, and 3, respectively. A key geometric fact which made this tractable was that any quadric surface in $\mathbb{P}^3$ contains a line, and that an equation for such a line can be found by solving a quadratic equation; this allows the degree 2 equation (which determines the quadric surface) to be replaced by a degree 1 equation (which determines the line), so that to find a solution of the system requires only the solution of a cubic equation. 

Similarly, in Hilbert's proof that $\RD(9) = 4$,  a Tschirnhaus transformation removing \emph{four} intermediate terms is required, so a system of equations of degrees 1, 2, 3, and 4 must be solved. Informally, Hilbert's idea is to first find a 3-plane inside the hypersurface determined by the equation of degree 2 (which is possible as long as the ambient dimension is high enough), then to find a line on the cubic surface determined inside this 3-plane by the equation of degree 3. If this can be done, all that remains is to intersect the line with the remaining equation of degree 4 and a solution can be found.

In general, one can consider the problem of finding a $k$-plane inside a degree $d$ hypersurface in $\mathbb{P}^N$. We can ask several questions: 
\begin{enumerate}
\item[Q1:] In terms of $d$ and $k$, how large must the ambient dimension $N$ be to guarantee that any degree $d$ hypersurface contains a $k$-plane?
\item[Q2:] What is the ``resolvent degree of finding the $k$-plane"? That is, if $\mathcal{M}_{d,N}$ is a moduli space of degree $d$ hypersurfaces in $\mathbb{P}^N$ and $\mathcal{M}_{d, N}^k$ is the space of such hypersurfaces together with a choice of $k$-plane, what is the resolvent degree of the map
$$\mathcal{M}_{d,N}^k \to \mathcal{M}_{d, N}$$
which forgets the choice of plane?
\item[Q3:] When is there a constructive algorithm to determine the $k$-plane? What are the degrees of the equations that must be solved? How large must $N$ be if no equation of degree higher than some given bound is to be permitted?
\end{enumerate}

As the $\RD(5) = 1$ and $\RD(9) = 4$ examples above illustrate, the answers to these questions have implications for the problem of finding Tschirnhaus transformations, since replacing a hypersurface with a linear subspace of that hypersurface allows us to replace a degree $d$ equation with one or more equations of degree 1, reducing the total degree of the system to be solved.

An answer to Q1 is given in Debarre and Manivel \cite{debarre1998vari}: for $d > 3$, any degree $d$ hypersurface in $\mathbb{P}^N$ contains a $k$-plane if
$$N \geq \frac{{d+k\choose k}}{k+1}+k.$$
In this case, the map $\mathcal{M}_{d, N}^k \to \mathcal{M}_{d, N}$ is surjective, and an upper bound on its resolvent degree is given by the dimension of the codomain:
$$\RD\left(\mathcal{M}_{d, N}^k \to \mathcal{M}_{d, N}\right) \leq \dim\left(\mathcal{M}_{d, N}\right).$$
For example,
$$\RD\left(\mathcal{M}_{3, 3}^1 \to \mathcal{M}_{3,3}\right) \leq \dim\left(\mathcal{M}_{3,3}\right) = 4,$$
corresponding to Hilbert's observation that finding a line on a cubic surface requires the solution of at most an algebraic function of four variables.

These dimension-counting arguments do not provide enough information to address Q3. For this we return to Sylvester's ideas. First, for the problem of finding a line on a degree $d$ hypersurface, repeated application of the obliteration formula suffices to compute an ambient dimension $N$ such that the desired line may be found without solving any equation of degree higher than $d$. Sylvester's methods can be readily adapted to the problem of finding a $k$-plane on a degree $d$ hypersurface without solving any equation of degree higher than $d$. Note that, when $N$ is large enough for this to be possible,
$$\RD\left(\mathcal{M}_{d,N}^k \to \mathcal{M}_{d, N}\right) \leq \RD(d)$$
giving a bound on resolvent degree which is often sharper than what one obtains from dimension-counting alone (though at the price of requiring a larger ambient dimension $N$ than that given by Waldron's theorem), so Sylvester's ideas are of interest even if one is only concerned with finding bounds on resolvent degree. In the remainder of this section we look at the $d = 3$ case in detail.

\subsection{Linear subspace of cubic hypersurfaces}

Let $\mathcal{M}_{3, N}$ be the moduli space of smooth cubic hypersurfaces in $\mathbb{P}^N$ and let $\mathcal{M}_{3,N}^r$ be the moduli space of pairs $(C, P)$ where $C$ is a smooth cubic hypersurface and $P$ is an $r$-plane contained in $C$. We can then consider the ``resolvent degree of finding an $r$-plane", i.e., 
\[
\RD(\mathcal{M}_{3,N}^r \to \mathcal{M}_{3,N}),
\]
where the map forgets the choice of plane. For example, for the problem of finding a line on a cubic surface, we have
\[
\RD(\mathcal{M}_{3,3}^1 \to \mathcal{M}_{3,3}) \leq \dim(\mathcal{M}_{3,3}) = 4,
\]
so that finding a line on a cubic surface in $\mathbb{P}^3$ requires the solution of algebraic equations of no more than 4 variables. Farb and Wolfson \cite{farb2020resolvent}, using an argument due to Klein, show this bound can be improved to
$$\RD(\mathcal{M}_{3,3}^1 \to \mathcal{M}_{3,3}.) \leq 3.$$

In higher dimensions it is easier to find a line. In particular, by Sylvester's obliteration method, as discussed in section 3, finding a line on a cubic surface in $\mathbb{P}^5$ requires the solution of no equation of degree higher than 3, and so is a resolvent degree 1 problem.
\begin{proposition}[Sylvester]
Given a cubic hypersurface $V$ in $\mathbb{P}^n$, we can find a line contained in $V$ by solving equations of degree no higher than 3 provided $n \geq 5.$ Hence
$$\RD(\mathcal{M}_{3, 5}^1 \to \mathcal{M}_{3, 5}) \leq \RD(3) = 1.$$
\end{proposition}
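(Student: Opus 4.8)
The plan is to apply the obliteration formula (the earlier Proposition of Sylvester) to the system that defines a line on a cubic hypersurface and read off the bound $n \geq 5$. Concretely, parametrize candidate lines in $\mathbb{P}^n$ as $\{P + \lambda Q : \lambda \in \mathbb{C}\}$ for two points $P, Q$. First I would choose $Q$ to be any point on $V$, which costs the solution of a single cubic equation (one equation of degree $3$, in many variables once the remaining homogeneous coordinates are treated as free); this is where the only genuine degree-$3$ solving happens. Having fixed such a $Q \in V$, the condition that the whole line $P + \lambda Q$ lie in $V$ is that the cubic polynomial $\lambda \mapsto F(P + \lambda Q)$ vanish identically. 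Its leading coefficient is $F(Q) = 0$ by the choice of $Q$, so the remaining constraints on $P$ are: the coefficient of $\lambda^2$ (linear in $P$), the coefficient of $\lambda^1$ (quadratic in $P$), and the coefficient of $\lambda^0$, namely $F(P) = 0$ (cubic in $P$). Thus finding $P$ amounts to solving a system with one equation each of degrees $1, 2, 3$ in the $n+1$ homogeneous coordinates of $P$, i.e. we need $[1,1,1]$ variables in the bracket notation, and then we need one more coordinate of freedom to have a line's worth of choices, so it suffices that $n + 1 \geq 1 + [1,1,1]$, equivalently $n \geq [1,1,1]$.

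Next I would compute $[1,1,1]$ by the obliteration recursion. One application removes the unique degree-$3$ equation, replacing the data $(n_3,n_2,n_1) = (1,1,1)$ with $(m_3,m_2,m_1) = (0,\,n_3+n_2,\,n_3+n_2+n_1) = (0,2,3)$, at the cost of one variable: $[1,1,1] \leq 1 + [0,2,3] = 1 + [2,3]$, writing $[2,3]$ for a system with $2$ quadrics and $3$ linear forms. Applying obliteration twice more to clear the two quadrics — each step sends $(n_2, n_1) \mapsto (n_2 - 1,\, n_2 + n_1)$ and adds a variable — gives $[2,3] \leq 1 + [1,5] \leq 2 + [0,6] = 2 + 6 = 8$. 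Hence $[1,1,1] \leq 1 + 8 = 9$. This would only give $n \geq 9$; to reach $n \geq 5$ I expect we must be more careful, exploiting that after the first obliteration step the residual system is very special — in the displayed computation in Section~\ref{sylvester} the author already notes $[1,0,0] \leq 5$ for lines on a cubic, using that a cubic surface in $\mathbb{P}^3$ contains a line unconditionally. The sharp route is therefore not to treat the degree-$2$ and degree-$1$ constraints on $P$ generically but to use the classical fact that the residual quadric-and-linear conditions cut out, inside a suitable linear space, a quadric surface in a $\mathbb{P}^3$, on which a line exists and can be found by one more quadratic solve; chaining $[1,0,0] \leq 1 + [1,1] \leq 2 + [2] = 2 + 3 = 5$ as in the text.

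So the key steps, in order, are: (i) pick $Q \in V$ by solving one cubic; (ii) set up the identical-vanishing conditions on $P$ and identify them as a system of one equation each of degree $3,2,1$; (iii) invoke the obliteration bound $[1,0,0] \leq 5$ already recorded in Section~\ref{sylvester} — noting it only ever requires solving equations of degree $\leq 3$ — to conclude that when $n \geq 5$ a suitable $P$, hence a line $P + \lambda Q \subset V$, can be found without elevation of degree; and (iv) translate this into the resolvent-degree statement: the construction exhibits a rational multi-section of $\mathcal{M}_{3,5}^1 \to \mathcal{M}_{3,5}$ obtained by solving only equations of degree $\leq 3$, so its resolvent degree is at most $\RD(3)$, which equals $1$ by the quintic discussion in Section~\ref{rd}. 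The main obstacle I anticipate is step (iii): making sure the bookkeeping of the obliteration recursion is done so that the answer is $5$ and not the weaker $9$, which requires using the unconditional existence of lines on cubic surfaces (equivalently of lines on quadric surfaces) rather than a purely generic count — and checking that at no stage of the backtracking does one have to solve an equation of degree exceeding $3$.
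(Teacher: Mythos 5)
Your proposal correctly identifies the tool (the obliteration formula) and even ends by writing down the paper's exact chain $[1,0,0] \le 1+[1,1] \le 2+[2] = 2+3 = 5$, which is the entirety of the paper's proof. But your own derivation of the first link of that chain contains a genuine confusion, and the repair you propose for the resulting discrepancy ($9$ versus $5$) is not the right one. The bracket $[n_k,\ldots,n_1]$ counts the variables needed to find a \emph{linear} solution (a line in the common zero locus); you instead treat $[1,1,1]$ as the cost of locating the single point $P$ and then add one ``for a line's worth of choices of $P$.'' Neither move is correct: one point $P$ suffices, since the desired line is then $PQ$; and a point solution of the derived system $\{\deg 1, \deg 2, \deg 3\}$ costs $[1,1]$, not $[1,1,1]$. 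One holds the cubic condition $f(P)=0$ aside, finds a line in the locus cut out by the remaining linear and quadratic conditions (this is what $[1,1]$ measures), and intersects that line with $V(f)$ by solving a single cubic in one variable. This ``hold one top-degree equation aside'' step is exactly what the $m_k = n_k - 1$ term in the obliteration formula encodes, and it is why the derived bracket is $[0,1,1]=[1,1]$ rather than $[1,1,1]$.

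Your proposed fix for the gap --- appealing to the unconditional existence of lines on cubic (or quadric) surfaces in $\mathbb{P}^3$, or to the residual conditions ``cutting out a quadric surface in a $\mathbb{P}^3$'' --- is not the mechanism at work: finding a line on a cubic surface in $\mathbb{P}^3$ requires a degree-$27$ equation, which is precisely the elevation of degree Sylvester's method is designed to avoid, and the recursion here bottoms out instead at $[2]=3$, a line in the intersection of two hyperplanes. There is also a small arithmetic slip in your side computation: with the paper's convention ($[m]=m+1$ for $m$ linear equations, e.g.\ $[2]=3$ and $[6]=7$ in the following proposition), your $[2,3]\le 2+[6]$ gives $9$, not $8$, so the naive route yields $[1,1,1]\le 10$. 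None of this affects the truth of the proposition --- the correct argument is the one-line computation you quote at the end --- but as written your argument does not establish why that computation, rather than your $[1,1,1]$ count, is the correct bookkeeping.
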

\begin{proof}
We wish to find a linear solution to a system of equations consisting of 1 equation of degree $3$ and no equations of lower degree. By Sylvester's obliteration formula, the ambient dimension required is
$$[1, 0, 0] \leq 1+[1, 1] \leq 2+[2] = 2+3 = 5.$$
%
\end{proof}
Sylvester's method also extends to finding higher-dimensional linear subspaces of hypersurfaces, when the dimension of the ambient space is large enough. For example, for cubic hypersurfaces in $\mathbb{P}^9$, finding a 2-plane can also be done solvably.
\begin{proposition}[Sylvester]
Given a cubic hypersurface $V = V(f)$ in $\mathbb{P}^{n}$, we can find a 2-plane contained in $V$ by solving equations of degree no higher than 3, provided $n \geq 11$. Hence
$$\RD(\mathcal{M}_{3, 11}^2 \to \mathcal{M}_{3, 11}) \leq \RD(3) = 1.$$
\end{proposition}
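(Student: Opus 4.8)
The plan is to build the $2$-plane up one dimension at a time, in the spirit of Hilbert's filtration argument: first produce a line $L \subseteq V$, then extend $L$ to a $2$-plane $\Pi \subseteq V$, arranging at each stage that the new datum is the solution of a system of equations of degrees $1$, $2$, and $3$, to which Sylvester's obliteration formula applies without elevation of degree.

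Since $n \geq 11 \geq 5$, the preceding proposition already produces a line $L \subseteq V$ while solving only equations of degree at most $3$; write $L = \langle p_0, p_1 \rangle$. The $2$-planes of $\mathbb{P}^n$ containing $L$ are parametrized by the projective space $\mathbb{P}^{n-2} = \mathbb{P}(k^{n+1}/\langle p_0, p_1\rangle)$ of directions transverse to $L$; for such a direction $p_2$ set $\Pi_{p_2} = \langle p_0, p_1, p_2\rangle$. Expanding $f(sp_0 + tp_1 + up_2)$ as a cubic form in $s,t,u$, each of its ten coefficients is a form in the coordinates of $p_2$ whose degree equals its degree in $u$, and the four coefficients of $u$-degree $0$ are the coefficients of $f|_L$, which vanish because $L \subseteq V$. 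Hence, using $L \subseteq V$, the condition $\Pi_{p_2} \subseteq V$ becomes a system on $\mathbb{P}^{n-2}$ consisting of three linear equations (the coefficients of $s^2u$, $stu$, $t^2u$), two quadratic equations (the coefficients of $su^2$, $tu^2$), and one cubic equation (the coefficient of $u^3$, namely $f(p_2)$).

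I would then solve this system by Sylvester's method: set aside the cubic $f(p_2) = 0$, find a line $\ell \subseteq \mathbb{P}^{n-2}$ inside the common zero locus of the three linear and two quadratic forms, and intersect $\ell$ with the cubic to obtain an admissible $p_2$ (one of three points; take any), so that $\Pi = \langle p_0, p_1, p_2\rangle \subseteq V$. By the obliteration formula the line $\ell$ exists as soon as $n - 2 \geq [2,3]$, and
$$[2,3] \leq 1 + [1,5] \leq 2 + [6] = 2 + 7 = 9,$$
so $n \geq 11$ is enough. Every equation solved in the construction — those supplied by the preceding proposition, those internal to this obliteration recursion, and the final cubic $f(p_2) = 0$ — has degree at most $3$, which is the first assertion. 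Carrying out the construction while letting the auxiliary choices vary (the choices inside the preceding proposition, the choices of line at the bottom of each obliteration recursion, and which intersection point is selected at each step) yields a dense collection of rational multi-sections of $\mathcal{M}_{3,11}^2 \to \mathcal{M}_{3,11}$, each realized as a tower of maps each resolved by equations of degree $\leq 3$ and hence of resolvent degree $\leq \RD(3) = 1$; therefore $\RD(\mathcal{M}_{3,11}^2 \to \mathcal{M}_{3,11}) \leq 1$.

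The hard part is the bookkeeping in the extension step. One has to notice that a $2$-plane through $L$ is recorded by a point of $\mathbb{P}^{n-2}$ rather than of $\mathbb{P}^n$ — equivalently, that $L$ itself lies in the solution locus inside $\mathbb{P}^n$ and must be divided out — so that the obliteration count $[2,3] \leq 9$ is measured against $n-2$; this shift by $2$ is exactly what turns the estimate into the bound $n \geq 11$. One should also verify that the six equations genuinely descend to $\mathbb{P}^{n-2}$ (this uses $f(p_0) = f(p_1) = 0$ together with the vanishing of the mixed coefficients of $f|_L$), and — for the resolvent-degree half of the statement — that the family of multi-sections obtained by varying the auxiliary data is dense in $\mathcal{M}_{3,11}^2$.
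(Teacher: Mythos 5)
Your proof is correct and is essentially the paper's own argument: find a line via the preceding proposition, parametrize $2$-planes through it by a point in $\mathbb{P}^{n-2}$, observe the resulting system has one cubic, two quadrics, and three linear equations, and apply the same obliteration computation $[2,3]\leq 1+[1,5]\leq 2+[6]=9$. Your bookkeeping (tracking which coefficients vanish automatically and why the count is against $n-2$) is in fact slightly more careful than the paper's.
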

\begin{proof}
Since $n \geq 5$, we can find a line $l$ contained in $V \subset \mathbb{P}^n$ solvably. Let $q$ and $r$ be distinct points of $l$ such that $q+\lambda r \in V$ for all $\lambda \in \mathbb{V}$. To find a plane contained in $V$, we look for a point
$$p = \mathbb{P}^{n-2} \subset \mathbb{P}^n \setminus l$$
such that $f(p+\mu q+\lambda r) = 0$ for all $\mu, \lambda \in \mathbb{V}$. Expanding this as a polynomial in $\mu$ and $\lambda$, the coefficients of $1$, $\lambda$, $\mu$, $\lambda\mu$, $\lambda^2$, and $\mu^2$, must vanish identically. 

This gives a system of equations in $n-2$ variables with 1 equation of degree 3, 2 equations of degree 2, and 3 equations of degree 1. To solve it, we look for a linear solution to the subsystem of equations of degree $<3$, then intersect this line with the remaining degree 3 equation. Using Sylvester's formula of obliteration, we have
$$[2, 3] \leq 1+[1, 5] \leq 2+[6] = 2+7 = 9,$$
so we can find the needed linear solution when $n-2 \geq 9$. Thus we can find a line on a cubic hypersurface in $\mathbb{P}^n$ when $n \geq 11$.
\end{proof}
The ambient dimension can be reduced slightly if some elevation of degree is allowed. From Segre [1945, pg 295, Sec 12], it is possible to determine a line on the intersection of two given quadrics by solving equations of degree no higher than 5, provided the ambient dimension is at least 4. Thus to find a linear solution of a system with 2 equations of degree 2, and 3 equations of degree 1, an ambient dimension of 7 is sufficient. Comparing to the last paragraph in the proof above we have
\begin{proposition}
Given a cubic hypersurface $V = V(f)$ in $\mathbb{P}^{n}$, we can find a 2-plane contained in $V$ by solving equations of degree no higher than 5, provided $n \geq 9$. Hence
$$\RD(\mathcal{M}_{3, 9}^2 \to \mathcal{M}_{3, 9}) \leq \RD(5) = 1.$$
\end{proposition}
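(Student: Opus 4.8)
The plan is to mimic the proof of the preceding proposition (the $n\geq 11$ case), changing only the bookkeeping at the very end. Since $n\geq 9>5$, Sylvester's proposition lets us first find a line $l\subset V$ by solving no equation of degree higher than $3$. Fix two distinct points $q,r$ spanning $l$, so that $f(\mu q+\lambda r)=0$ identically in $\mu,\lambda$, and fix a $\mathbb{P}^{n-2}\subset\mathbb{P}^n$ complementary to $l$; every $2$-plane containing $l$ meets this $\mathbb{P}^{n-2}$ in a unique point, which we call $p$. Writing $F$ for the symmetric trilinear form polarizing $f$ and expanding
$$f(p+\mu q+\lambda r)=f(p)+3F(p,p,\mu q+\lambda r)+3F(p,\mu q+\lambda r,\mu q+\lambda r)+f(\mu q+\lambda r),$$
the requirement that $\langle p,q,r\rangle\subset V$ is that this vanish identically in $\mu,\lambda$. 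The coefficient of $1$ is $f(p)$, giving one equation of degree $3$; the coefficients of $\mu$ and $\lambda$ are (up to a constant) $F(p,p,q)$ and $F(p,p,r)$, giving two equations of degree $2$; and the coefficients of $\mu^2$, $\mu\lambda$, $\lambda^2$ are $F(p,q,q)$, $F(p,q,r)$, $F(p,r,r)$, giving three equations of degree $1$. The four cubic monomials in $\mu,\lambda$ have vanishing coefficients because $l\subset V$. So we again face a system in $n-2$ variables with one equation of degree $3$, two of degree $2$, and three of degree $1$.

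Rather than feed the degree-$\leq 2$ part into the obliteration formula as in the $n\geq 11$ proof, I would impose the three linear conditions first, cutting the $\mathbb{P}^{n-2}$ of candidate points down to a linear subspace $\Lambda\cong\mathbb{P}^{n-5}$, and then invoke the result of Segre quoted above: since $n\geq 9$ gives $\dim\Lambda=n-5\geq 4$, we can find a line $l'\subset\Lambda$ lying on the intersection of the two quadrics $\{F(p,p,q)=0\}\cap\{F(p,p,r)=0\}$ by solving no equation of degree higher than $5$. Finally, intersecting $l'$ with the remaining cubic $\{f(p)=0\}$ amounts to solving a single cubic equation, producing a point $p$ (in fact up to three of them) such that $\langle p,q,r\rangle$ is a $2$-plane contained in $V$. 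No equation of degree above $5$ is solved at any stage, so the $2$-plane is obtained by an algorithm of complexity $\leq\RD(5)=1$; this yields a rational multi-section of $\mathcal{M}_{3,9}^2\to\mathcal{M}_{3,9}$ of resolvent degree $\leq\RD(5)=1$, hence $\RD(\mathcal{M}_{3,9}^2\to\mathcal{M}_{3,9})\leq 1$.

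The step I expect to need the most care is the genericity and nondegeneracy bookkeeping that makes Segre's construction applicable: one must check that for a general cubic $V$ (and a general choice of $l$, $q$, $r$) the two quadrics $F(p,p,q)=0$ and $F(p,p,r)=0$, restricted to $\Lambda$, are in the position required by Segre's theorem — in particular that their intersection is nonempty and not too singular — and that the final cubic does not degenerate. One should also verify that Segre's cited statement is being invoked with exactly the right hypotheses, so that "ambient dimension at least $4$" is precisely what $n\geq 9$ delivers after the three hyperplane cuts; and if one wants the strong form of the resolvent-degree conclusion (a \emph{dense} family of multi-sections), one would redo the multi-section argument as in the $\RD(5)=1$ example rather than merely exhibiting one algorithm.
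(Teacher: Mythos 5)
Your proposal is correct and follows essentially the same route as the paper: reduce to the system of one cubic, two quadrics, and three linear equations in $\mathbb{P}^{n-2}$ (as in the $n\geq 11$ proposition), use the linear equations to cut down to a $\mathbb{P}^{n-5}$, apply Segre's result on lines in the intersection of two quadrics in ambient dimension $\geq 4$, and finish by intersecting that line with the cubic. Your explicit polarization bookkeeping and the caveats about genericity are more detail than the paper supplies, but the argument is the same.
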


\section{Removing 5 Terms from a Polynomial}\label{actual_proof}
Given a degree $n$ polynomial
$$x^n+a_1x^{n-1}+\ldots+a_nx+a_n$$
we wish to find a Tschirnhaus transformation
$$T(x) = b_{n-1}x^{n-1}+b_{n-2}x^{n-2}+\ldots+b_1x+b_0$$
such that after setting $y = T(x)$ the transformed polynomial
$$y^n+A_1y^{n-1}+\ldots+A_{n-1}y+A_{n}$$
satisfies $A_1 = A_2 = A_3 = A_4 = A_5 = 0$. To determine the coefficients $b_0, \ldots, b_{n-1}$ of $T$ then requires the solution of a system of equations with degrees 1, 2, 3, 4, 5, in $\mathbb{P}^{n-1}$. In general, to find a solution to such a system requires solving a polynomial of degree $5! = 120$, but when $n$ is large enough this elevation of degree can be partially avoided.

We first informally sketch the geometry underlying Wolfson's bound of $n = 41$. The general idea is that by finding linear subspaces of the hypersurfaces corresponding to the polynomials of our system, the total degree of the system can be reduced. In this case, to find the necessary Tschirnhaus transformation one first finds a $6$-plane inside a quadric surface in $\mathbb{P}^{13}$ (this only requires solving degree 2 equations, so is resolvent degree 1). Then, intersecting the degree 3 equation with the 6-plane yields a cubic hypersurface in $\mathbb{P}^6$. If we can then find a 2-plane inside this cubic hypersurface, then by intersecting the degree 4 and 5 equations with this plane, we are left with a system of total degree 20 to solve. In summary, we have the chain
$$V_4 \cap V_5 \subset \mathbb{P}^2 \subset V_3 \subset \mathbb{P}^{6} \subset V_2 \subset \mathbb{P}^{13} = V_1 \subset \mathbb{P}^{14}.$$

This gives an algorithm for finding the desired Tschirnhaus transformation provided one is able to find the necessary 2-plane inside the cubic hypersurface in $\mathbb{P}^6$. Wolfson uses a dimension count to show that
\[RD(\mathcal{M}_{3,6}^2 \to \mathcal{M}_{3,6}) \leq \dim(\mathcal{M}_{3,6}) = 35,
\]
and so is able to use this Tschirnhaus transformation to show $\RD(n) \leq n-6$ whenever $n-6 \geq 35$.

By increasing the ambient dimension in which the the cubic hypersurface lives, we can use Sylvester's ideas to find a plane inside a cubic hypersurface in $\mathbb{P}^9$ by solving only equations of degree 5 or less. (i.e., in a resolvent degree 1 way). This allows the $n = 41$ bound for removing 5 terms from a degree $n$ polynomial to be reduced to $n = 21$, with simpler irrationalities involved in finding the necesary Tschirnhaus transformation.
\begin{theorem*}[Main Theorem]
Let $n \geq 21$. Given a degree $n$ polynomial
$$x^n+a_1x^{n-1}+\ldots+a_nx+a_n$$
we can find a Tschirnhaus transformation
$$T(x) = b_{n-1}x^{n-1}+b_{n-2}x^{n-2}+\ldots+b_1x+b_0$$
such that after setting $y = T(x)$ the transformed polynomial
$$y^n+A_1y^{n-1}+\ldots+A_{n-1}y+A_{n}$$
satisfies $A_1 = A_2 = A_3 = A_4 = A_5 = 0$. The coefficients $b_0, \ldots b_{n-1}$ of $T$ can be determined by solving equations of degree at most $20$.
\end{theorem*}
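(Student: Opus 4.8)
The plan is to realize, for a general degree $n$ polynomial with $n\ge 21$, a descending chain of nested linear subspaces
\[
V(A_4)\cap V(A_5)\cap\mathbb{P}^2\ \subset\ \mathbb{P}^2\ \subset\ V(A_3)\cap\mathbb{P}^9\ \subset\ \mathbb{P}^9\ \subset\ V(A_2)\cap\mathbb{P}^{n-2}\ \subset\ \mathbb{P}^{n-2}=V(A_1)\ \subset\ \mathbb{P}^{n-1}_{\mathbf b},
\]
and to take $T$ to correspond to any point of the final (generically finite) intersection. Here I use that finding $T$ with $A_1=\cdots=A_5=0$ amounts to finding a point of $V(A_1)\cap\cdots\cap V(A_5)\subset\mathbb{P}^{n-1}_{\mathbf b}$, where $V(A_i)$ is a hypersurface of degree $i$ whose equation is polynomial in $a_1,\dots,a_n$. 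The one new ingredient relative to Wolfson's $n=41$ argument is to house the cubic hypersurface in $\mathbb{P}^9$ rather than $\mathbb{P}^6$, so that a contained $2$-plane can be produced \emph{constructively}, by solving equations of degree at most $5$, using the proposition above giving a $2$-plane on a cubic hypersurface in $\mathbb{P}^9$; the price is that the preceding quadric must now contain a $9$-plane, which forces $n-2\ge 19$, i.e.\ $n\ge 21$.

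Carrying this out, the first step is free: $V(A_1)$ is a hyperplane, so choosing coordinates identifies it with $\mathbb{P}^{n-2}$, imposing $A_1=0$ without solving anything, and from here on everything lives in $\mathbb{P}^{n-2}$. For the second step, the restriction $Q$ of $V(A_2)$ is a quadric in $\mathbb{P}^{n-2}$ with $n-2\ge 19$; for general $p$ it is smooth (and in any case has rank large enough), so it contains a linear subspace of dimension $\lfloor(n-3)/2\rfloor\ge 9$. Such a subspace is produced by the classical orthogonal-splitting recursion: find one isotropic vector $v$ by solving a quadratic, pass to the nondegenerate quadratic space $v^\perp/\langle v\rangle$, and iterate ten times; this uses only quadratic equations, so is a resolvent degree $1$ step. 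For the third step, the restriction of $V(A_3)$ to the resulting $\mathbb{P}^9$ is, for general $p$, a genuine cubic hypersurface, and the cited proposition yields a $2$-plane $\Pi\cong\mathbb{P}^2$ contained in it by solving equations of degree at most $5$. For the last step, the restrictions of $V(A_4)$ and $V(A_5)$ to $\Pi$ are plane curves of degrees $4$ and $5$ with no common component (for general $p$); eliminating one coordinate gives a resultant of a quartic and a quintic, an equation of degree at most $4\cdot 5=20$, after which the remaining coordinate is recovered from a GCD of degree at most $4$. Any point so obtained lies in $V(A_1)\cap\cdots\cap V(A_5)$, so its homogeneous coordinates $[b_0:\cdots:b_{n-1}]$ define the required $T$, and the highest-degree equation solved anywhere in the procedure is the degree-$20$ resultant.

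The geometric inputs here are already in hand: maximal isotropic subspaces of quadrics are found by solving quadratics, and the $\mathbb{P}^9$ version of ``$2$-plane on a cubic'' is the proposition above. I expect the real work — and the main obstacle — to be the genericity bookkeeping that keeps the chain from collapsing for a general $p$: that the restriction of $V(A_2)$ to the hyperplane $V(A_1)$ has rank large enough to contain a $9$-plane; that the particular $9$-plane output by the splitting algorithm is not contained in $V(A_3)$, so that the restriction of $A_3$ is a nonzero cubic; that the $2$-plane $\Pi$ lies in neither $V(A_4)$ nor $V(A_5)$; and that the two plane curves cut out on $\Pi$ share no component. Each of these is a nonvanishing condition, failing only on a proper closed subset of $\mathcal{P}_n$, so it suffices either to exhibit a single polynomial for which the construction goes through or to run the construction once symbolically; alternatively one argues directly that the subspaces produced by Sylvester's obliteration and by the orthogonal-splitting recursion are in sufficiently general position with respect to the hypersurfaces not yet used. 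Once this is settled, the bound on the degrees of the equations solved — and hence the theorem — follows from the chain above.
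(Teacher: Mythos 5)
Your proposal is correct and follows essentially the same chain as the paper's proof: eliminate a variable with $V(A_1)$, find a $\mathbb{P}^9$ in the quadric (forcing $n-2\geq 19$), apply the $\mathbb{P}^9$ proposition to get a $2$-plane in the cubic by solving equations of degree at most $5$, and finish with the degree $4\cdot 5=20$ intersection. The extra detail you supply on the orthogonal-splitting recursion, the resultant, and the genericity bookkeeping goes beyond what the paper records but does not change the argument.
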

\begin{proof}
The equations $A_1 = 0$, $A_2 = 0$, $A_3 = 0$, $A_4 = 0$, $A_5 = 0$ impose polynomial conditions of degree 1, 2, 3, 4, and 5, respectively, on the point $(b_0, \ldots, b_{n-1}) \in \mathbb{P}^{n-1}$ to be determined. Using the degree 1 equation we can eliminate one variable. The degree 2 equation then determines a quadric hypersurface in $\mathbb{P}^{n-2}$. By the classical theory of linear subspaces of quadrics, we can find a $\mathbb{P}^9$ contained in this hypersurface provided $n-2 \geq 19$.

Next, we consider the cubic hypersurface determined by this $\mathbb{P}^9$ and the degree 3 equation. By Proposition 5 of the previous section, we can find a $\mathbb{P}^2$ contained in this hypersurface by solving equations of degree at worst 5. Finally, intersecting the remaining equations of degree 4 and 5 determine two curves in this $\mathbb{P}^2$, whose points of intersection are governed by an equation of degree at most 20. Each such point then satisfies all 5 polynomial conditions, by construction, and so yields a Tschirnhaus transformation of the desired form.
\end{proof}

In the same way that the reduction of the quintic to one-parameter form can be translated into the language of resolvent degree to yield $\RD(5) = 1$, this theorem implies $\RD(n) \leq n-k$ for $n \geq 21$.

\bibliography{tschirnhaus_draft.bib}{}

\begin{thebibliography}{10}

\bibitem{brauer1975resolvent}
Richard Brauer.
\newblock On the resolvent problem.
\newblock {\em Annali di Matematica Pura ed Applicata}, 102(1):45--55, 1975.

\bibitem{buhler1999tschirnhaus}
Joe Buhler and Zinovy Reichstein.
\newblock On tschirnhaus transformations.
\newblock In {\em Topics in Number theory}, pages 127--142. Springer, 1999.

\bibitem{chebotarev1954problem}
Grigorii~Nikolaevich Chebotarev.
\newblock On the problem of resolvents.
\newblock {\em Uchenye Zapiski Kazanskogo Universiteta. Seriya
  Fiziko-Matematicheskie Nauki}, 114(2):189--193, 1954.

\bibitem{chen2017erland}
Alexander Chen, Yang-Hui He, and John McKay.
\newblock {Erland Samuel Bring}'s ``transformation of algebraic equations".
\newblock {\em arXiv preprint arXiv:1711.09253}, 2017.

\bibitem{debarre1998vari}
Olivier Debarre and Laurent Manivel.
\newblock Sur la vari et e des espaces lin eaires contenus dans une
  intersection compl ete.
\newblock {\em Math. Ann}, 312:549--574, 1998.

\bibitem{dixmier1993histoire}
Jacques Dixmier.
\newblock Histoire du 13e probleme de hilbert.
\newblock {\em Cahiers du s{\'e}minaire d'histoire des math{\'e}matiques},
  3:85--94, 1993.

\bibitem{farb2020resolvent}
Benson Farb and Jesse Wolfson.
\newblock Resolvent degree, hilbert’s 13th problem and geometry.
\newblock {\em L’Enseignement Math{\'e}matique}, 65(3):303--376, 2020.

\bibitem{hamilton1836inquiry}
Sir William~Rowan Hamilton.
\newblock {\em Inquiry Into the Validity of a Method Recently Proposed by
  George B. Jerrard, Esq. for Transforming and Resolving Equations of Elevated
  Degrees Undertaken at the Request of the Association}.
\newblock Richard and John E. Taylor, 1836.

\bibitem{hilbert1927gleichung}
David Hilbert.
\newblock {\"U}ber die gleichung neunten grades.
\newblock {\em Mathematische Annalen}, 97(1):243--250, 1927.

\bibitem{Jerrard}
George~Birch Jerrard.
\newblock {\em Mathematical Researches}.
\newblock Longman, Bristol and London, 1834.

\bibitem{jerrard1835xxiv}
George~Birch Jerrard.
\newblock Xxiv. on certain transformations connected with the finite solution
  of equations of the fifth degree: To the editors of the philosophical
  magazine and journal.
\newblock {\em The London, Edinburgh, and Dublin Philosophical Magazine and
  Journal of Science}, 7(39):202--203, 1835.

\bibitem{lagrange1771reflexions}
Joseph Lagrange.
\newblock R{\'e}flexions sur la r{\'e}solution alg{\'e}brique des
  {\'e}quations.
\newblock {\em Nouveaux M{\'e}moires de l'Acad{\'e}mie Royale des Sciences et
  Belles-Lettres de Berlin}, 1771.

\bibitem{segre1945algebraic}
Beniamino Segre.
\newblock The algebraic equations of degrees 5, 9, 157,..., and the arithmetic
  upon an algebraic variety.
\newblock {\em Annals of Mathematics}, pages 287--301, 1945.

\bibitem{sutherland2021gn}
Alexander~J. Sutherland.
\newblock {GN Chebotarev's} ``on the problem of resolvents".
\newblock {\em arXiv preprint arXiv:2107.01006}, 2021.

\bibitem{sutherland2021upper}
Alexander~J Sutherland.
\newblock Upper bounds on resolvent degree and its growth rate.
\newblock {\em arXiv preprint arXiv:2107.08139}, 2021.

\bibitem{sylvester1887so}
James~Joseph Sylvester.
\newblock On the so-called tschirnhausen transformation.
\newblock 1887.

\bibitem{von1683methodus}
Ehrenfried~Walther von Tschirnhaus.
\newblock Methodus auferendi omnes terminos intermedios ex data aeqvatione
  (method of eliminating all intermediate terms from a given equation).
\newblock {\em Acta Eruditorum (1683)}, pages 204--207.

\bibitem{wiman1927anwendung}
Anders Wiman.
\newblock {\em {\"U}ber die Anwendung der Tschirnhausentransformation auf die
  Reduktion algebraischer Gleichungen}.
\newblock Almquist \& Wiksells Boktr., 1927.

\bibitem{wolfson2021tschirnhaus}
Jesse Wolfson.
\newblock Tschirnhaus transformations after hilbert.
\newblock {\em L’Enseignement Math{\'e}matique}, 66(3):489--540, 2021.

\end{thebibliography}
\bibliographystyle{plain}
\end{document}